\theoremstyle{plain}
\newtheorem{thm}{Theorem}
\newtheorem{lem}{Lemma}
\newtheorem{prop}{Proposition}
\newtheorem{bem}{Remark}
\def\XXint#1#2#3{{\setbox0=\hbox{$#1{#2#3}{\int}$} 
     \vcenter{\hbox{$#2#3$}}\kern-.5\wd0}}
\providecommand{\N}{\mathbb{N}}
\providecommand{\R}{\mathbb{R}}
\DeclareMathOperator{\diver}{div}
\DeclareMathOperator{\dist}{dist}
\renewcommand{\qed}{\hfill $\Box$}
\begin{document} 

\allowdisplaybreaks

\title[A note on the local regularity of distributional solutions and subsolutions]{A note on the local
regularity of distributional solutions and subsolutions of semilinear elliptic systems}

\author{Rainer Mandel}
\address{R. Mandel \hfill\break
Scuola Normale Superiore di Pisa \hfill\break
I- Pisa, Italy}
\email{Rainer.Mandel@sns.it}
\date{}

\subjclass[2000]{Primary: }
\keywords{}

\begin{abstract}
  In this note we prove local regularity results for distributional solutions and subsolutions of semilinear
  elliptic systems such as  
  \begin{equation*} 
    L_k^m u_k = f_k(x,u_1,\ldots,u_N) \quad\text{in }\R^n\qquad (k=1,\ldots,N)
  \end{equation*}
  where $L_1,\ldots,L_N$ are of divergence-form and $n\geq 2m$. We show that distributional subsolutions are
  locally bounded from above if $|f_k(x,z)|\leq C(1+|z|^p)$ for $1\leq p<\frac{n}{n-2m},k=1,\ldots,N$. 
  Furthermore, regularity properties of subsolutions and improved versions for bounded subsolutions
  are given. Even for $f_1=\ldots=f_N=0$ our results are new.
\end{abstract}

\maketitle

\section{Introduction}
  
  The starting point of this paper is the following question: Being given a semilinear system of elliptic
  partial equations of the form
  \begin{equation} \label{Gl eq}
    L_k^m u_k = f_k(x,u_1,\ldots,u_N) \quad\text{in }\Omega\qquad (k=1,\ldots,N)
  \end{equation}
  on some open set $\Omega\subset\R^n$ what is the maximal regularity of an arbitrary
  distributional subsolution or solution of \eqref{Gl eq}? More specifically we are interested in assumptions
  on divergence-form operators $L_1,\ldots,L_N$ of second order and nonlinearities $f_1,\ldots,f_N$ which
  ensure that distributional solutions or subsolutions (see \eqref{Gl def distributional solution},\eqref{Gl
  def distributional subsolution} for a definition) are locally bounded or bounded from above, respectively.
  
  \medskip 
  
  The study of unbounded weak and distributional solutions was initiated about 50 years ago and it gave rise
  to several interesting methods and results. Let us try to give a short overview of the subject with a focus
  on second order equations (i.e. $m=1,N=1$) such as
  \begin{equation} \label{Gl Modell-Gl}
     -\diver(A\nabla u) = f(x,u)\quad\text{in }\Omega
  \end{equation}
  where $\Omega$ is a bounded domain in $\R^n,\, n\geq 3$. We start with unbounded solutions that owe
  their existence to the roughness of the matrix function $A$ which we will always
  assume to be positive definite and bounded  on $\Omega$. In a fundamental
  paper Serrin \cite{Ser_pathological} provided explicit examples for matrix functions $A$ and unbounded
  solutions of \eqref{Gl Modell-Gl} for $f\equiv 0$ such that the solutions lie in Sobolev spaces
  $W^{1,s}_{loc}(\Omega)$ with $s<2$.
  Given the fact that local regularity results of De~Giorgi-Nash-Moser type (see chapter~8 in
  \cite{GiTr_elliptic}) are valid for weak solutions lying in $W^{1,2}_{loc}(\Omega)$ we see that different a priori assumptions
  on the regularity of the solution may lead to different kinds of solutions. In the works of Brezis
  \cite{Bre_On_a_conjecture} and Jin, Maz'ya, van Schaftingen \cite{JiMavaS_pathological} this issue was
  analyzed further. For instance, they proved that if the entries of $A$ are only assumed to be continuous
  then solutions lying in $W^{1,1}_{loc}(\Omega)$ do in general not possess better regularity properties whereas
  solutions in $W^{1,p}_{loc}(\Omega),\,p>1$ lie in $W^{1,q}_{loc}(\Omega)$ for all $q\in [1,\infty)$. In
  particular these solutions are always bounded (in contrast to their gradients, 
  see \cite{JiMavaS_pathological}). Under the assumption of H\"older-continuous coefficients the latter result
  had previously been obtained by Hager and Ross \cite{HaRo_a_regularity}.
  For weak solutions in $W^{1,1}_{loc}(\Omega)$ Brezis proved the $W^{1,q}_{loc}(\Omega)$-regularity for
  matrix functions $A$ with Dini-continuous entries, see Theorem~2 in \cite{Bre_On_a_conjecture}. All these
  results concern weak solutions whereas Bao and Zhang \cite{ZhBaI_Regularity_of,ZhBaII_Regularity_of} studied
  regularity results for distributional solutions with Lipschitz continuous $A$. They
  showed that in this case distributional solutions of $-\diver(A\nabla u)=f$ have the ''typical'' regularity
  properties of elliptic problems and no pathological solutions like the ones mentioned above can exist.
  
%
%
  \medskip  
   
  Another way of producing unbounded solutions of \eqref{Gl Modell-Gl} is to consider nonlinearities $f(x,z)$
  that grow sufficiently fast to infinity as $z$ tends to infinity. This can be illustrated via the
  model equation $-\Delta u = u^p$ on a domain $\Omega\subset\R^n$. 
  For $p>\frac{n}{n-2}$ unbounded distributional solutions are given by the explicit formula 
  $\hat u(x):=c_{n,p}|x-x_0|^{-2/(p-1)}$ for some $c_{n,p}>0,x_0\in\Omega$. More sophisticated unbounded
  distributional solutions of this equation in suitable domains
  $\Omega\subset\R^n$ with appropriate boundary conditions are due to Pacard
  \cite{Pac_existence_de_solutions,Pac_existence_and} and Mazzeo,Pacard \cite{MaPa_a_construction} for 
  exponents $p\geq \frac{n}{n-2}$. Finally let us mention that similar constructions were performed in
  \cite{MaRe_Distributional} in the context of nonlinear Schr\"odinger
  equations on $\R^n$ for $p$ slightly larger than $\frac{n}{n-2}$.
   
  \medskip
  
  Let us now describe in which way our results contribute to the issue. We deal with subsolutions
  (Theorem~\ref{Thm subsolution}) and solutions (Theorem~\ref{Thm solution}) of the $2m$-th order semilinear
  elliptic system~\eqref{Gl eq} on an open set $\Omega\subset\R^n$. We restrict our attention to the case
  $n\geq 2m$ which, from the point of view of regularity theory, is more interesting. We find regularity properties
  of subsolutions which will be shown to be optimal in a general setting. A new feature of our approach is
  that these results can however be improved once we add some integrability assumption on the negative parts
  of the subsolutions. Furthermore, even in the easiest case of linear problems of second order equations
  ($m=1,N=1,f=0$) our results are new since the involved linear divergence-form differential operators $L_1,\ldots,L_N$ may have Lipschitz continuous but also less regular coefficient functions, see assumption $(A1)_\alpha$ below.
  In particular we can treat more general situations than in \cite{ZhBaI_Regularity_of,ZhBaII_Regularity_of}
  where distributional solutions of ${-\diver(A\nabla u)=f}$ were investigated under the assumption that $A$
  is locally Lipschitz. Our assumptions on the coefficient functions will be shown to be sharp in the sense
  that for slightly less regular coefficients our regularity results cannot hold any more in view of the
  pathological solutions found by Jin, Maz'ya, van Schaftingen \cite{JiMavaS_pathological} and Serrin
  \cite{Ser_pathological}. In Remark~\ref{Bem 0}~(c) this aspect will be explained in detail.  
  
  \medskip

  Before coming to the statement of our main result let us provide the definitions of distributional solutions
  and subsolutions of \eqref{Gl eq}. To this end we introduce a class of differential operators which is
  suitable for the definition of distributional solutions lying in $L^\alpha_{loc}(\Omega;\R^N)$. We will
  always assume that the following hypothesis is satisfied:
  \begin{itemize}
    \item[$(A1)_\alpha$] The differential operators $L_1,\ldots,L_N$ are given by
     $L_k\phi:= -\diver(A_k\nabla\phi)$ for $\phi\in
     C_c^\infty(\Omega)$ where the matrix functions $A_1,\ldots,A_N$ are symmetric and
     positive definite with entries in $W^{2m-1,\alpha/(\alpha-1)}_{loc}(\Omega)$ for some
    $\alpha\in [1,\frac{n}{n-1})$.
  \end{itemize}
  The assumption $1\leq \alpha<\frac{n}{n-1}$ makes sure that $A_1,\ldots,A_N$ are H\"older-continuous by
  Sobolev's embedding theorem so that several classical results in the theory of elliptic partial differential
  equations \cite{GiTr_elliptic} can be applied in the sequel. In Remark \ref{Bem 0}~(c) we will show that this
  assumption is not only helpful from a technical point of view but also essential for our Theorem to be true.
  We say that $u\in L^\alpha_{loc}(\Omega;\R^N)$ is a distributional solution of \eqref{Gl eq} if we have
  $f(\cdot,u)\in L^1_{loc}(\Omega;\R^N)$ as well as
  \begin{equation} \label{Gl def distributional solution}
    \int_\Omega u_k L_k^m \phi  = \int_\Omega f_k(\cdot,u)\,\phi
     \qquad\text{for all }\phi\in C_c^\infty(\Omega)\qquad (k=1,\ldots,N). 
  \end{equation}
  Accordingly a distributional subsolution is supposed to satisfy
  \begin{equation} \label{Gl def distributional subsolution}
    \int_\Omega u_k L_k^m \phi  \leq \int_\Omega f_k(\cdot,u)\,\phi
     \qquad\text{for all }\phi\in C_c^\infty(\Omega),\phi\geq 0 \qquad (k=1,\ldots,N). 
  \end{equation}
  Next let us state the main result of this paper.
        
  \begin{thm} \label{Thm subsolution} 
    Let $\Omega\subset\R^n$ be open, $n\geq 2m$ and assume $u\in L^\alpha_{loc}(\Omega;\R^N)$ solves
    \begin{equation*} 
      L_k^m u_k \leq C(g+{u_1^+}^p+\ldots+{u_N^+}^p)\quad \text{in } \Omega \qquad (k=1,\ldots,N)
    \end{equation*}
    in the distributional sense with $u^+\in L^p_{loc}(\Omega;\R^N)$ and where $L_1,\ldots,L_N$ satisfy
    $(A1)_\alpha$ and $g\in L^r_{loc}(\Omega),r>\frac{n}{2m}$. Furthermore assume $1\leq p<\frac{n}{n-2m}$ or
    ${p\geq \frac{n}{n-2m},u^+\in L^q_{loc}(\Omega;\R^N)}$ for some $q>\frac{n(p-1)}{2m}$. Then we have $u^+\in L^\infty_{loc}(\Omega;\R^N)$ and $u\in
    W^{2m-1,t}_{loc}(\Omega;\R^N)$ for all $t\in [1,\frac{n}{n-1})$. In addition, the following implications
    hold true:
    \begin{itemize}
      \item[(i)] If $n>2m$ and $u_-\in L^{\tilde q}_{loc}(\Omega;\R^N),\tilde q>\frac{n}{n-2m}$, then
      $u\in W^{2m-1,\tilde t}_{loc}(\Omega;\R^N),\, 1\leq \tilde t< \frac{2m\tilde q}{1+(2m-1)\tilde q}$.
      \item[(ii)] If $n=2m$ and $u_-\in L^\infty_{loc}(\Omega;\R^N)$, then $u\in
      W^{2m-1,\frac{2m}{2m-1}}_{loc}(\Omega;\R^N)$.
    \end{itemize}
  \end{thm}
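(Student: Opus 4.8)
The plan is to establish the $L^\infty_{loc}$-bound for $u^+$ first and then to bootstrap the regularity through the linear theory for the operators $L_k^m$. Since the $N$ equations are only coupled through the right-hand sides and the hypotheses are symmetric in $k$, I would treat each component in parallel, writing $v:=u_k^+$ and $h:=C(g+{u_1^+}^p+\ldots+{u_N^+}^p)$, so that $L_k^m u_k\le h$ distributionally with $h\in L^s_{loc}(\Omega)$ for the exponent $s:=\min\{r,q/p\}$ (using $g\in L^r_{loc}$, $u^+\in L^q_{loc}$ — and in the subcritical case $p<\tfrac{n}{n-2m}$ one takes $q$ as large as Sobolev embedding of $W^{2m,\cdot}$ into $L^p$ allows, which is exactly where the strict inequality is consumed). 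The heart of the matter is a Moser-type iteration adapted to the polyharmonic-type operator $L_k^m$: one wants to test the distributional inequality against (regularizations of) powers $(v^+)^{\beta}$ times a cutoff and to absorb the resulting terms. The difficulty is that $m\ge 2$ means $L_k^m$ is not a second-order operator, so the classical maximum-principle / De Giorgi machinery is not directly available; the natural route is to factor $L_k^m=L_k\circ L_k^{m-1}$, set $w_k:=L_k^{m-1}u_k$ (which makes sense as a distribution once the coefficients of $A_k$ are regular enough — this is precisely what $(A1)_\alpha$ with $W^{2m-1,\alpha/(\alpha-1)}_{loc}$ coefficients buys us), and observe that $L_k w_k\le h$ in the distributional sense, a genuine second-order divergence-form subsolution inequality with Hölder coefficients to which the classical local boundedness theorem (Theorem 8.17 in \cite{GiTr_elliptic}) applies.

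Concretely, the first main step is to show $w_k\in L^\infty_{loc}$: apply the second-order De Giorgi–Nash–Moser local-boundedness estimate for subsolutions of $L_k w_k\le h$ with $h\in L^s_{loc}$, $s>\tfrac{n}{2}\ge\tfrac{n}{2m}$ — wait, here one must be careful, since $s>\tfrac{n}{2m}$ need not exceed $\tfrac{n}{2}$ when $m\ge2$. So the bootstrap has to go the other way: one does not bound $w_k$ directly in $L^\infty$, but rather recovers $u_k$ from $w_k$ by inverting $L_k^{m-1}$ through the Riesz-potential / fundamental-solution representation, gaining $2(m-1)$ derivatives (equivalently, $2(m-1)$ orders of integrability improvement), and then iterates. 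The cleanest organization is therefore a single bootstrap on the scale of Lebesgue exponents: starting from $u^+\in L^q_{loc}$ one has $h\in L^{s}_{loc}$ with $s=\min\{r,q/p\}$; local $L^p$–$L^q$ estimates for $L_k^{-m}$ (Calderón–Zygmund plus Sobolev, valid because the coefficients of $A_k$ lie in $W^{2m-1,\alpha/(\alpha-1)}_{loc}$, hence the operator is a small perturbation of a constant-coefficient elliptic operator on small balls) then give $u_k$ — up to a harmonic-type correction that is smooth — in $L^{q'}_{loc}$ with $\tfrac1{q'}=\tfrac1s-\tfrac{2m}{n}$ as long as the right-hand side is above the critical exponent, and $u_k\in L^\infty_{loc}$ once $s>\tfrac{n}{2m}$. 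The condition $p<\tfrac{n}{n-2m}$ (resp. $q>\tfrac{n(p-1)}{2m}$) is exactly what guarantees that this iteration strictly increases $s$ at each step and reaches $s>\tfrac{n}{2m}$ in finitely many steps — this is the standard subcritical-Sobolev bootstrap, and I would carry it out as such. The one delicate point is handling the sign: we only control $L_k^m u_k$ from above, so we cannot invert $L_k^m$ freely; here one uses that a distribution $T$ with $L_k^m T\le 0$ which lies in $L^\alpha_{loc}$ is (by the earlier-established regularity for the linear case, or by elliptic regularity applied to $-L_k^m T=:\mu\ge0$ a nonnegative measure) controlled by a Riesz potential of $\mu$ plus an $L^\alpha$ function, and $\mu$ being a \emph{positive} measure on compact subsets has finite total mass, which is enough to run the potential estimates on the positive part $u_k^+$.

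For the membership $u\in W^{2m-1,t}_{loc}$ for all $t<\tfrac{n}{n-1}$: once $u^+\in L^\infty_{loc}$, the right-hand side $h$ is in $L^r_{loc}$ with $r>\tfrac{n}{2m}$, and $-L_k^m u_k=h-\nu_k$ where $\nu_k\ge0$ is a nonnegative Radon measure (the defect measure of the subsolution inequality), locally finite. Now I invoke the linear regularity theory for $L_k^m$ with $W^{2m-1,\alpha/(\alpha-1)}_{loc}$ coefficients: the solution operator maps a finite measure on the right-hand side into $W^{2m-1,t}_{loc}$ for every $t<\tfrac{n}{n-1}$ — this is the classical fact that the Newtonian (poly)potential of a finite measure lies in $W^{2m-1,t}_{loc}$ for $t$ in that range, transferred to variable coefficients via a freezing/perturbation argument exactly as permitted by the Hölder continuity of the $A_k$. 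The $L^r_{loc}$ part of the right-hand side, being far better than a measure, contributes a term in $W^{2m,\tilde r}_{loc}\hookrightarrow W^{2m-1,t}_{loc}$. This yields the first regularity conclusion.

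For the refined statements (i) and (ii): here the extra hypothesis $u_-\in L^{\tilde q}_{loc}$ with $\tilde q>\tfrac{n}{n-2m}$ (resp. $u_-\in L^\infty_{loc}$) upgrades the control on $u_k=u_k^+-u_k^-$ from "merely $L^\alpha$" to "$L^{\tilde q}$" (resp. $L^\infty$). Since $u_k^+$ is already in $L^\infty_{loc}$, we get $u_k\in L^{\tilde q}_{loc}$, and then the same linear solution operator for $L_k^m$, now fed with the knowledge that $u_k$ itself (not just a one-sided bound) sits in $L^{\tilde q}_{loc}$, allows the fractional-integration/Sobolev count to be sharpened: inverting the relation $L_k^{m-1}(L_k u_k)=\text{(measure)}$ and using that $u_k\in L^{\tilde q}$ pins down $L_k u_k$ in a better space, and one propagates this through the remaining $m-1$ second-order inversions. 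A direct computation with the Riesz-potential exponents gives precisely $\tilde t<\tfrac{2m\tilde q}{1+(2m-1)\tilde q}$ in case $n>2m$, and the endpoint exponent $\tfrac{2m}{2m-1}$ when $n=2m$ and $u_-$ is bounded (so effectively $\tilde q=\infty$).

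The main obstacle I anticipate is making the variable-coefficient inversions rigorous at the level of distributions that are only known a priori to be in $L^\alpha_{loc}$ with $\alpha$ possibly equal to $1$: one cannot naively write Green's representation formulas, and the factorization $L_k^m=L_k\circ L_k^{m-1}$ requires the intermediate object $L_k^{m-1}u_k$ to be a well-defined distribution with controlled regularity, which is exactly the role of the assumption $(A1)_\alpha$ (coefficients in $W^{2m-1,\alpha/(\alpha-1)}_{loc}$, i.e. enough derivatives to apply $L_k$ $m$ times to an $L^\alpha$ function and land in the dual of $C_c^\infty$). Getting the bookkeeping of these $m$ successive second-order inversions right — each one a Calderón–Zygmund + Sobolev step on small balls where the coefficients are frozen — while tracking the passage from "one-sided distributional inequality" to "nonnegative defect measure" is the technical core; everything else is the familiar subcritical bootstrap.
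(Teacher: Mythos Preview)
Your overall architecture --- introduce the nonnegative defect measure, represent $u$ locally as (minus) a Green potential of that measure plus a potential of $g$ plus an $L^m$-harmonic remainder, then bootstrap Lebesgue exponents until $u^+\in L^\infty_{loc}$ --- matches the paper's proof, and the conclusion $u\in W^{2m-1,t}_{loc}$ for $t<\tfrac{n}{n-1}$ follows from the potential-of-a-finite-measure argument exactly as you indicate. Where you differ in implementation is that the paper does not freeze coefficients or use Calder\'on--Zygmund perturbation, but works directly with the iterated Green's function $G_m$ of $L^m$ on a ball together with the pointwise two-sided bounds $c|x-y|^{2m-n}\le G_m(x,y)\le C|x-y|^{2m-n}$ and $|\partial_x^\alpha G_m|\le C|x-y|^{2m-n-|\alpha|}$; this is what your remark about ``one cannot naively write Green's representation formulas'' underestimates.

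There is, however, a genuine gap in your treatment of (i) and (ii). You write that knowing $u_k\in L^{\tilde q}$ ``pins down $L_k u_k$ in a better space'' via the factorization $L_k^{m-1}(L_k u_k)=\text{measure}$, and that ``a direct computation with the Riesz-potential exponents'' then yields the stated $\tilde t$. Neither step works: membership of $u_k$ in $L^{\tilde q}$ gives no direct information about $L_k u_k$ (two derivatives away), and the $(2m-1)$-th derivatives of the Green potential of a mere finite measure do not in general lie in any $L^{\tilde t}$ with $\tilde t\ge \tfrac{n}{n-1}$ without further input. The paper's mechanism is different and requires a separate lemma. From the decomposition $u|_B=-u_1+u_2+h_B$ with $u_1=\int_B G_m(\cdot,y)\,d\mu_B(y)\ge 0$ and $u_2,h_B$ locally bounded, the hypothesis $u^-\in L^{\tilde q}_{loc}$ forces $u_1\in L^{\tilde q}_{loc}$; the \emph{lower} Green bound then gives $\phi_{2m-n}:=\int_B|\cdot-y|^{2m-n}\,d\mu_B(y)\in L^{\tilde q}_{loc}$. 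The missing step is the implication
\[
  \phi_{2m-n}\in L^{\tilde q}_{loc},\ \mu_B(B)<\infty \ \Longrightarrow\ \phi_{1-n}\in L^{\tilde t}_{loc}\quad\text{for all }\tilde t<\tfrac{2m\tilde q}{1+(2m-1)\tilde q},
\]
which is not a standard Riesz-potential mapping property (since $\mu_B$ is a measure, not a function) but a H\"older-inequality argument: one splits $|x-y|^{1-n}=|x-y|^{(2m-n)/r}\cdot|x-y|^{(2m-n)(r-1)/r-(2m-1)}$ with a suitable $r$, and uses \emph{both} $\phi_{2m-n}\in L^{\tilde q}$ and $\mu_B(B)<\infty$. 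The upper bound $|\partial_x^\alpha G_m|\le C|x-y|^{1-n}$ for $|\alpha|=2m-1$ then closes the argument. Without this lemma (or an equivalent) the exponents in (i) and (ii) are not reachable from your outline.
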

  
  Here we used the notation $u^\pm:=(u_1^\pm,\ldots,u_N^\pm)$ to denote the vector of the positive/negative
  parts of the component functions of $u$. The proof of Theorem~\ref{Thm subsolution} is based on a
  representation formula for subsolutions and a well-known bootstrap procedure that seems to go back to
  Stampacchia in the case $m=1$. In the following remark we discuss extensions of Theorem~\ref{Thm
  subsolution} and why it can not be essentially improved.
  
  \medskip
  
  \begin{bem} \label{Bem 0} ~ 
    \begin{itemize}
      \item[(a)] The iteration scheme from the proof may be slightly modified to prove local boundedness
      results for problems of the kind 
      $$
      L_k^m u_k\leq C(g_1+g_2{u_1^+}^p+\ldots+g_2{u_N^+}^p)\quad \text{in } \Omega \qquad
      (k=1,\ldots,N) 
    $$ 
    where $g_1\in L^{r_1}_{loc}(\Omega),r_1>\frac{n}{2m}$ and $g_2\in L^{r_2}_{loc}(\Omega),r_2>1$. In this
    situation the critical exponent for local boundedness $\frac{n}{n-2m}$ changes to
    $\frac{n}{n-2m}(1-\frac{1}{r_2})$. Also the differential operators $L_k^m=L_k\circ \ldots \circ L_k$ may
    be replaced by compositions of $m$ different divergence-form operators as in $(A1)_\alpha$ without
    changing the result.
      \item[(b)] The restrictions on $p,q$ from the theorem are optimal in view of several results
      for the model equation $-\Delta u = u^p$. Unbounded distributional solutions in the case
      $p=\frac{n}{n-2}$ were found by Pacard \cite{Pac_existence_and} and in the case $p>\frac{n}{n-2}$
      the function $\hat u$ from the introduction may be taken. Note that 
      $\hat u\in L^q_{loc}(\R^n)$ only for $1\leq q<\frac{n(p-1)}{2}$.
  \item[(c)] The condition $\alpha\in [1,\frac{n}{n-1})$ from assumption $(A_1)_\alpha$ is sharp in the sense
  that Theorem~\ref{Thm subsolution} is not true for $\alpha\in [\frac{n}{n-1},\infty)$.
  Indeed, in \cite{JiMavaS_pathological} Proposition~1.2 the authors
  constructed a cotinuous matrix function $A$ and a weak (and hence distributional) solution $u\in
  W^{1,1}_{loc}(\R^n)\subset L^{n/(n-1)}_{loc}(\R^n)$ of the equation $\diver(A\nabla u)=0$ such that
  $u\notin W^{1,p}_{loc}(\R^n)$ for any other $p>1$. Having a look at the proof of this result (and in
  particular Lemma~2.1 and equation (8) in \cite{JiMavaS_pathological}) one realizes that the coefficient
  functions of $A$ lie in $W^{1,n}_{loc}(\R^n)$. Hence, this example shows that our theorem does not hold in
  the case $\alpha=\frac{n}{n-1}$. For $\alpha\in (\frac{n}{n-1},\frac{n}{n-2})$  Theorem~\ref{Thm
  subsolution} can not hold either since Serrin's pathological solutions from \cite{Ser_pathological} provide
  counterexamples. In the case $\alpha\in [\frac{n}{n-2},\infty)$ the nonlinearity allows us to take the
  solutions from (b) as counterexamples. It is unclear to the author, however, whether the linear problem
  $\diver(A\nabla u)=0$ admits unbounded distributional solutions for such $\alpha$. Finally, we
  think that the case $\alpha=\infty$ has not been treated yet.
      \item[(d)] Also the results from the parts are (i),(ii) are close to optimal. First of all
      one should notice that regularity results in Sobolev spaces of order $2m$ or higher can in general not
      hold since the fundamental solution of $(-\Delta)^m$ does not have such weak derivatives.
      Moreover, the functions $u_\delta(x):= \sigma(x_1^2+\ldots+x_{2m}^2)^{\delta/2}$ for small
      $\delta>0$ and appropriate choice of $\sigma\in\{-1,+1\}$ define bounded polysubharmonic
      functions which do not lie in $W^{2m-1,2m/(2m-1-\delta)}_{loc}(\R^n)$ so that our result for
      $\tilde q=\infty$ may be considered as sharp. More generally, for suitable $\sigma\in\{-1,+1\}$ the
      functions $x\mapsto \sigma(x_1^2+\ldots+x_k^2)^{\delta/2}$ define polysubharmonic functions in
      $L^{\tilde q}_{loc}(\R^n),\frac{n}{n-2m}<\tilde q<\infty$ provided $\delta>\max\{2m-k,-\frac{k}{\tilde
      q}\}$. These functions do not lie in $W^{2m-1,k/(2m-1-\delta)}$ and for certain $\tilde q$ the exponent
      $\frac{k}{2m-1-\delta}$ can be very close to $\frac{2m\tilde q}{1+(2m-1)\tilde q}$. More precisely, if
      we could define these functions for all $k:=\frac{2m\tilde q}{\tilde q-1}\in (2m,n)$ (which, in
      general, is not a natural number) then we would obtain the optimality of the exponent. Unfortunately,
      we have to leave open whether non-formal examples exist or not.
    \end{itemize}
  \end{bem}
    
  \medskip
  
  Theorem~\ref{Thm subsolution} admits a refined version for distributional solutions of~\eqref{Gl eq} which
  we will formulate below for the sake of completeness. It generalizes the results of
  Bao and Zhang \cite{ZhBaI_Regularity_of,ZhBaII_Regularity_of} to nonlinear higher order problems and
  complements the existence and regularity results of Jin, Maz'ya, van Schaftingen
  \cite{JiMavaS_pathological} and Brezis \cite{Bre_On_a_conjecture} in the sense of Remark~\ref{Bem 0}~(c).
  The assumptions on the right hand side $f$ are the following:
  \begin{itemize}
    \item[(A2)] $f:\Omega\times\R^N\to\R^N$ is a Carath\'{e}odory function satisfying 
    \begin{equation*} 
      |f(x,z)| \leq C(g(x)+|z|^p)\quad\text{on }\Omega\times\R^N\quad\text{where }g\in
      L^r_{loc}(\Omega),r\in (1,\infty).
    \end{equation*}
  \end{itemize}
  Under the assumptions $(A1)_\alpha,(A2)$ we show that in the case $1\leq
  p<\frac{n}{n-2m}$ every distributional solution $u\in L^\alpha_{loc}(\Omega;\R^N)\cap
  L^p_{loc}(\Omega;\R^N)$ is a bounded weak solution so that classical elliptic regularity results as in
  \cite{GiTr_elliptic} are applicable. We will omit the proof since it results from discussing special cases
  in the proof of Theorem~\ref{Thm solution}, see Remark~\ref{Bem 1}~(b).
  
  \begin{thm}\label{Thm solution} 
    Let $\Omega\subset\R^n$ be open, $n\geq 2m$ and let $u\in L^\alpha_{loc}(\Omega;\R^N)\cap
    L^p_{loc}(\Omega;\R^N)$ solve
    \begin{equation} \label{Gl equation}
      L_k^m u_k = f_k(x,u_1,\ldots,u_N) \quad\text{in }\Omega \qquad (k=1,\ldots,N)
    \end{equation} 
    in the distributional sense where $L_k,f_k$ satisfy $(A1)_\alpha,(A2)$.     
    Moreover assume $1\leq p<\frac{n}{n-2m}$ or $p\geq \frac{n}{n-2m},u\in L^q_{loc}(\Omega;\R^N)$ for some
    $q>\frac{n(p-1)}{2m}$. Then we have $u\in W^{2m,r}_{loc}(\Omega;\R^N)$. Moreover, $A\in
    C^\infty(\Omega;\R^{n\times n}), f\in C^\infty(\Omega\times\R^N;\R^N)$ implies  $u\in
    C^\infty(\Omega;\R^N)$.
  \end{thm}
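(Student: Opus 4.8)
The plan is to reduce the theorem to a Stampacchia-type iteration for the linear equation $L_k^m u_k=h_k$ with $h_k:=f_k(\cdot,u)\in L^1_{loc}(\Omega)$, and then, when $A$ and $f$ are smooth, to close the argument by a routine Schauder bootstrap. Since $u$ solves \eqref{Gl equation} \emph{with equality}, assumption (A2) gives $L_k^m(\pm u_k)\le|f_k(\cdot,u)|\le C(g+|u_1|^p+\dots+|u_N|^p)$ in $\Omega$ in the distributional sense, while $(\pm u_k)^+\le|u_k|$ lies in $L^p_{loc}(\Omega)$, respectively in $L^q_{loc}(\Omega)$. Hence, in the favourable range $r>\frac n{2m}$, Theorem~\ref{Thm subsolution} applied to $u$ and to $-u$ immediately yields $u^+,u^-\in L^\infty_{loc}(\Omega;\R^N)$, so $u\in L^\infty_{loc}(\Omega;\R^N)$; then $f(\cdot,u)\in L^r_{loc}(\Omega;\R^N)$ by (A2), and $u\in W^{2m,r}_{loc}(\Omega;\R^N)$ follows from the linear $L^r$-theory for $L_k^m$ described below. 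It remains to treat (a) the range $1<r\le\frac n{2m}$ and (b) the $C^\infty$-statement.

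For (a) I would rerun the iteration behind Theorem~\ref{Thm subsolution} directly for the solution $u$. Written out, $L_k^m=L_k\circ\cdots\circ L_k$ is a $2m$-th order elliptic operator with top-order coefficients of class $C^{2m-2,\gamma}_{loc}$ and lower-order coefficients in suitable spaces $C^{j,\gamma}_{loc}$ or $L^{\alpha/(\alpha-1)}_{loc}$ with $\frac{\alpha}{\alpha-1}>n$, thanks to $(A1)_\alpha$ and Sobolev's embedding. As in the proof of Theorem~\ref{Thm subsolution}, this produces, on any ball $B$ with $\overline{B}\subset\Omega$, a representation formula expressing $u_k$ locally as an order-$2m$ Green potential of $h_k$ plus an $L_k^m$-polyharmonic — hence locally bounded — remainder; since the Green potential has the mapping properties of the Riesz potential $I_{2m}$, $h_k\in L^s_{loc}$ implies $u_k\in W^{2m,s}_{loc}$ for $1<s<\infty$, with the usual endpoint $u_k\in\bigcap_{s<n/(n-2m)}L^s_{loc}$ when $h_k\in L^1_{loc}$.

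Put $\rho_0:=p$ if $p<\frac n{n-2m}$ and $\rho_0:=q$ if $p\ge\frac n{n-2m}$; the hypotheses on $p$ and $q$ are exactly what is needed to place $\rho_0$ strictly above the critical exponent $\rho^\ast:=\frac{n(p-1)}{2m}$ — for $p\ge\frac n{n-2m}$ this is the assumption $q>\frac{n(p-1)}{2m}$, and for $p<\frac n{n-2m}$ it is the elementary equivalence $p<\frac n{n-2m}\Leftrightarrow p>\frac{n(p-1)}{2m}$. Starting from $u\in L^{\rho_0}_{loc}(\Omega;\R^N)$ and alternately invoking (A2) ($h_k\in L^{\min(r,\rho_j/p)}_{loc}$) and the linear estimate together with Sobolev's embedding ($u\in L^{\rho_{j+1}}_{loc}$ with $\tfrac1{\rho_{j+1}}\le\max(0,\tfrac p{\rho_j}-\tfrac{2m}n,\tfrac1r-\tfrac{2m}n)$), one gets a sequence $(\rho_j)$ that is strictly increasing while $\min(r,\rho_j/p)<\frac n{2m}$; since the relevant iteration has no fixed point in $(\rho^\ast,\infty)$ this cannot persist, so after finitely many steps either $r\le\frac n{2m}$ and $\rho_j/p\ge r$ — whence $h_k\in L^r_{loc}$ at once — or $r>\frac n{2m}$ and $\rho_j/p>\frac n{2m}$ — whence $u_k\in W^{2m,s}_{loc}\hookrightarrow L^\infty_{loc}$ for some $s>\frac n{2m}$ and then $h_k\in L^r_{loc}$. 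In either case $u\in W^{2m,r}_{loc}(B;\R^N)$, and letting $B$ exhaust $\Omega$ proves the first assertion; the endpoint cases $p=1$ and $s=\frac n{2m}$ are handled exactly as in \cite{GiTr_elliptic} and in the proof of Theorem~\ref{Thm subsolution}.

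Finally, if $A$ and $f$ are smooth — taking $g$ locally bounded in (A2), as one may in this case — the iteration above is no longer truncated and gives $u\in L^\infty_{loc}(\Omega;\R^N)$, hence $h_k=f_k(\cdot,u)\in L^\rho_{loc}$ for every $\rho<\infty$, hence $u\in W^{2m,\rho}_{loc}(\Omega;\R^N)$ for all $\rho<\infty$ and in particular $u\in C^{2m-1,\gamma}_{loc}(\Omega;\R^N)$ for all $\gamma\in(0,1)$. A standard Schauder iteration then finishes the proof: if $u\in C^{j,\gamma}_{loc}$, the chain rule gives $f_k(\cdot,u)\in C^{j,\gamma}_{loc}$, and interior Schauder estimates for the $2m$-th order elliptic operator $L_k^m$ — now with $C^\infty$ coefficients — yield $u_k\in C^{j+2m,\gamma}_{loc}$; iterating gives $u\in C^\infty(\Omega;\R^N)$. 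The genuine obstacle is the linear input of the second paragraph: the representation formula and the $W^{2m,s}$ (resp. Schauder) mapping properties of the iterated divergence-form operator $L_k\circ\cdots\circ L_k$ under the borderline regularity $(A1)_\alpha$ of its coefficients — in particular rewriting it in a form to which the Calder\'on--Zygmund and Schauder theories apply and absorbing the lower-order terms — which is precisely the technical core shared with the proof of Theorem~\ref{Thm subsolution}; the bootstrap and the $C^\infty$-tail are then routine.
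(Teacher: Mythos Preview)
Your proposal is correct and matches the paper's approach: the paper in fact omits the proof of Theorem~\ref{Thm solution} altogether, stating only that it follows from the proof of Theorem~\ref{Thm subsolution} together with Remark~\ref{Bem 1}(b) (for solutions the Radon measure $\mu_B$ in the representation formula is absolutely continuous with density $g^-$, which upgrades the conclusion from $W^{2m-1,t}_{loc}$ to $W^{2m,r}_{loc}$), and your Stampacchia bootstrap plus Schauder tail is precisely the intended expansion. The one imprecision is that invoking Theorem~\ref{Thm subsolution} for $u$ and $-u$ \emph{separately} does not quite fit its hypothesis, since the right-hand side $C(g+|u|^p)$ contains both $(u_j^+)^p$ and $(u_j^-)^p$; the clean remedy is to apply Theorem~\ref{Thm subsolution} once to the doubled $2N$-system $v=(u,-u)$, for which $|u_j|^p=\sum_l (v_l^+)^p$, though your direct iteration in part~(a) already bypasses the issue.
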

    
  \medskip
  
  As a consequence the assumptions of the theorem guarantee that unbounded distributional solutions of
  \eqref{Gl equation} can only exist for $n>2m,p\geq \frac{n}{n-2m}$ and that these solutions have to be
  searched in subspaces of $L^q_{loc}(\Omega)$ with $p\leq q\leq \frac{n(p-1)}{2m}$ while distributional
  solutions lying in higher Lebesgue spaces are automatically locally bounded.
  
  \medskip
  
  In Section~\ref{sec: Lemma} we provide an auxiliary Lemma needed for the proof of the parts (i),(ii) of
  Theorem~\ref{Thm subsolution}. In Section~\ref{sec: Proof} we prove Theorem~\ref{Thm subsolution} using some
  results concerning Green's functions which we provide in the Appendix.

  \section{An auxiliary lemma} \label{sec: Lemma}
  
  For a given ball $B\subset\R^n$, $\gamma<0$ and a Radon measure $\mu$ on
  $B$ with $\mu(B)<\infty$ we introduce the measurable functions $\phi_\gamma,\psi:B\to [0,\infty]$ as
  follows:
  \begin{equation} \label{Gl Def phi_psi}
      \phi_\gamma(x) :=\int_B |x-y|^\gamma\,d\mu(y),\qquad
      \psi(x) := \int_B \log(1/|x-y|)\,d\mu(y).
  \end{equation}
  By definition, a Radon measure on $B$ is a Borel-regular measure on $B$ which is locally finite. 
  In the proof of Theorem~\ref{Thm subsolution} we will need the following result. 
  
  \begin{lem} \label{Lem 1}
    Let $n,k\in\N,\gamma,p,q\in\R$ satisfy $0>\gamma>k-n,q>p\geq 1$ and let $\mu,B$ be given as above. Then
    the following implications hold true:
    \begin{align*}
        \text{(i)}&\quad\text{If }\phi_\gamma\in L^q_{loc}(B) \text{ then }\phi_{\gamma-k} \in L^p_{loc}(B)
        \text{ provided }1\leq  p<\frac{(n+\gamma)q}{n+\gamma+(q-1)k},\\
        \text{(ii)}&\quad \text{If }\psi\in L^\infty_{loc}(B) \text{ then }\phi_{-k} \in
        L^{\frac{n}{k}}_{loc}(B) \text{ provided }n>2k. 
    \end{align*}
  \end{lem}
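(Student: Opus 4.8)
The plan is to handle the two parts separately, in both cases by factoring the singular kernel through kernels whose potentials are already under control.

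\smallskip

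For (i) I would use a single application of Hölder's inequality in the variable $y$. Fix $\lambda\in(0,1)$, write $|x-y|^{\gamma-k}=|x-y|^{\gamma\lambda}\cdot|x-y|^{(\gamma-k)-\gamma\lambda}$, and apply Hölder to the measure $\mu$ with the conjugate exponents $(\tfrac1\lambda,\tfrac1{1-\lambda})$; this yields the pointwise bound
\[
  \phi_{\gamma-k}(x)\ \le\ \phi_\gamma(x)^\lambda\,\phi_\delta(x)^{1-\lambda},\qquad
  \delta:=\gamma-\frac{k}{1-\lambda}.
\]
Here $\phi_\gamma\in L^q_{loc}(B)$ by hypothesis, while $\phi_\delta\in L^1_{loc}(B)$ holds automatically as soon as $\delta>-n$: indeed Tonelli gives $\|\phi_\delta\|_{L^1(B')}\le C(n,\delta,B',B)\,\mu(B)<\infty$ for $B'\subset\subset B$, and the condition $\delta>-n$ is equivalent to $\lambda<\tfrac{n+\gamma-k}{n+\gamma}$ (note $n+\gamma-k>0$ by the standing hypothesis $\gamma>k-n$). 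A second Hölder inequality, now in $x$ over $B'$ with exponents $\tfrac q\lambda$ and $\tfrac1{1-\lambda}$, then gives $\|\phi_{\gamma-k}\|_{L^p(B')}\le\|\phi_\gamma\|_{L^q(B')}^\lambda\|\phi_\delta\|_{L^1(B')}^{1-\lambda}<\infty$ whenever $\tfrac1p=\tfrac\lambda q+(1-\lambda)$. Given $p$ with $1<p<\tfrac{(n+\gamma)q}{n+\gamma+(q-1)k}$ one solves this last identity for $\lambda=\tfrac{q(p-1)}{p(q-1)}$; the only computation to do is to check that $\lambda<\tfrac{n+\gamma-k}{n+\gamma}$ is precisely the assumed bound on $p$, while $\lambda<1$ follows from $p<q$ (which in turn follows from $p<\tfrac{(n+\gamma)q}{n+\gamma+(q-1)k}<q$). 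The remaining case $p=1$ is immediate from $\gamma-k>-n$ alone.

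\smallskip

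For (ii) I would first localise: fixing $B'\subset\subset B$ and splitting $\mu$ into its restriction to a slightly larger ball $B''$ with $B'\subset\subset B''\subset\subset B$ and the rest, the far part contributes a bounded function on $B'$, so one may assume $\mu$ is compactly supported in $B$ with $\diam(\supp\mu)<1$ and $\psi\in L^\infty(\supp\mu)$. The key step is the endpoint estimate $\phi_{-n/2}\in L^2_{loc}$ for the auxiliary potential of order $n/2$. By Tonelli,
\[
  \int_{B'}\phi_{-n/2}(x)^2\,dx=\int\!\!\int K(y,z)\,d\mu(y)\,d\mu(z),\qquad
  K(y,z):=\int_{B'}|x-y|^{-n/2}|x-z|^{-n/2}\,dx,
\]
and a standard three–region decomposition of $B'$ (the ball of radius $|y-z|/2$ around $y$, the one around $z$, and the complement, on which $|x-y|\simeq|x-z|$) gives the borderline bound $K(y,z)\le C_1+C_2\log(1/|y-z|)$ for $y,z$ in a bounded set; the logarithm appears exactly because $\tfrac n2+\tfrac n2=n$. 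Integrating this over $\mu\times\mu$ and using $\int\log(1/|y-z|)\,d\mu(z)=\psi(y)$ yields $\int_{B'}\phi_{-n/2}^2\le C_1\mu(B)^2+C_2\int\psi\,d\mu<\infty$, since $\psi\in L^\infty(\supp\mu)$ and $\mu(B)<\infty$.

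\smallskip

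Finally I would transfer this back to $\phi_{-k}$ via the semigroup property of the Riesz kernels. Because $n>2k$, both $\alpha:=\tfrac n2-k$ and $\beta:=\tfrac n2$ lie in $(0,n)$ and $\alpha+\beta=n-k<n$, so the pointwise identity $\int_{\R^n}|x-z|^{\alpha-n}|z-y|^{\beta-n}\,dz=C\,|x-y|^{\alpha+\beta-n}=C\,|x-y|^{-k}$ is valid; integrating against $\mu$ and applying Tonelli this says $\phi_{-k}=C\,I_{n/2-k}\big(\phi_{-n/2}\big)$, where $I_\alpha$ denotes the Riesz potential of order $\alpha$. Now the Hardy--Littlewood--Sobolev inequality gives $I_{n/2-k}\colon L^2\to L^{t}$ with $\tfrac1t=\tfrac12-\tfrac{n/2-k}{n}=\tfrac kn$, the admissibility $1<2<\tfrac{n}{n/2-k}=\tfrac{2n}{n-2k}$ being exactly the statement $k>0$; combined with $\phi_{-n/2}\in L^2_{loc}$ from the previous step (and truncating $\phi_{-n/2}$ to a compact set, absorbing the bounded remainder) this yields $\phi_{-k}\in L^{n/k}_{loc}$. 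The only places needing real care are the kernel estimate for $K$ and the bookkeeping of the localisations; the analytic content is just the borderline logarithmic bound together with one application of Hardy--Littlewood--Sobolev, and the hypothesis $n>2k$ enters precisely in making the Riesz composition $I_{n/2-k}\circ I_{n/2}=I_{n-k}$ admissible.
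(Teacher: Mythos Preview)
Your argument for part~(i) is essentially the paper's proof: the paper writes the same H\"older splitting with parameter $r:=\frac{p(q-1)}{q(p-1)}$, which is exactly $1/\lambda$ in your notation, and your auxiliary exponent $\delta=\gamma-\frac{k}{1-\lambda}$ coincides with the paper's $\gamma-\frac{kr}{r-1}$. The second H\"older step in $x$ is also the same, arranged so that the $\phi_\delta$–factor enters with total exponent~$1$ and can be handled by Tonelli and $\mu(B)<\infty$.

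For part~(ii) you take a genuinely different route. The paper simply reruns the scheme of~(i) with a logarithmic weight in place of the power weight: one writes $|x-y|^{-k}=\log(1/|x-y|)^{1/r}\cdot\big(|x-y|^{-k}\log(1/|x-y|)^{-1/r}\big)$ with $r=\tfrac{n}{n-k}$, applies H\"older in $y$ against $\psi\in L^\infty_{loc}$, and after integrating in $x$ is left with the single scalar condition
\[
\sup_{y\in B}\int_{\tilde B}|x-y|^{-n}\log(1/|x-y|)^{1-n/k}\,dx<\infty,
\]
which holds precisely when $n>2k$. Your approach instead establishes an $L^2$ bound for the half–order potential $\phi_{-n/2}$ via the borderline kernel estimate $\int_{B'}|x-y|^{-n/2}|x-z|^{-n/2}\,dx\le C_1+C_2\log(1/|y-z|)$, and then upgrades to $\phi_{-k}\in L^{n/k}_{loc}$ through the Riesz composition $I_{n/2-k}\circ I_{n/2}$ and Hardy--Littlewood--Sobolev. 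Both are correct; the paper's version is shorter and keeps~(i) and~(ii) formally parallel, while yours trades that economy for a more structural picture (the condition $n>2k$ appears as the admissibility $\tfrac n2-k>0$ of the intermediate Riesz potential rather than as a log–integrability threshold), at the cost of the extra localisation bookkeeping and the appeal to the semigroup formula.
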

  \begin{proof}
    In order to prove part (i) we set $r:= \frac{p(q-1)}{q(p-1)}$ so that the following is true:
    $$
      \frac{qr}{p}>r>1,\qquad \gamma-\frac{kr}{r-1}>-n.
    $$ 
    Using these inequalities and H\"older's inequality we obtain for every $\tilde B\subset\subset B$
    \begin{align*}
      \int_{\tilde B} |\phi_{\gamma-k}(x)|^p\,dx
      &= \int_{\tilde B} \Big( \int_B |x-y|^{\frac{\gamma}{r}}\, |x-y|^{\frac{\gamma(r-1)}{r}-k} \,d\mu(y)\Big)^p
      \,dx \\
      &\leq \int_{\tilde B} \Big(\int_B |x-y|^{\gamma}\,d\mu(y)\Big)^{\frac{p}{r}} \, 
       \Big( \int_B |x-y|^{\gamma-\frac{kr}{r-1}}\,d\mu(y)\Big)^{\frac{p(r-1)}{r}}  \,dx \\
      &= \int_{\tilde B} \phi_\gamma(x)^{\frac{p}{r}} \,
      \Big( \int_B |x-y|^{\gamma-\frac{kr}{r-1}}\,d\mu(y)\Big)^{\frac{p(r-1)}{r}}  \,dx \\
      &\leq \|\phi_\gamma\|_{L^q(\tilde B)}^{\frac{p}{r}} \, 
      \Big( \int_{\tilde B} \Big(\int_B |x-y|^{\gamma-\frac{kr}{r-1}} \,d\mu(y) \Big)^{\frac{p(r-1)}{r}\cdot
      \frac{qr}{qr-p}} \,dx\Big)^{\frac{qr-p}{qr}}   \\
      &= \|\phi_\gamma\|_{L^q(\tilde B)}^{\frac{p}{r}} \, 
      \Big( \int_{\tilde B} \int_B |x-y|^{\gamma-\frac{kr}{r-1}} \,d\mu(y) \,dx \Big)^{\frac{qr-p}{qr}}   \\
      &= \|\phi_\gamma\|_{L^q(\tilde B)}^{\frac{p}{r}} \, 
      \Big( \int_B \Big( \int_{\tilde B} |x-y|^{\gamma-\frac{kr}{r-1}} \,dx \Big)\,d\mu(y)
      \Big)^{\frac{qr-p}{qr}}
      \\
      &\leq \|\phi_\gamma\|_{L^q(\tilde B)}^{\frac{p}{r}} \, 
      \Big( \int_B C \,d\mu(y) \Big)^{\frac{qr-p}{qr}}   \\
      &= \|\phi_\gamma\|_{L^q(\tilde B)}^{\frac{p}{r}} \, (C\mu(B))^{\frac{qr-p}{qr}}  
    \end{align*}
    for some $C>0$. This proves part (i). The proof of (ii) is similar. Using analogous
    estimates for $\gamma=0,q=\infty,p=\frac{n}{k},r=\frac{n}{n-k}$ we find that $\phi_{-k}$ lies in $L^p(B)$ provided
    $\psi\in L^\infty(B)$ and 
    $$
      \sup_{y\in B} \int_{\tilde B} |x-y|^{-n}\log(1/|x-y|)^{1-\frac{n}{k}} \,dx < \infty.
    $$
    This condition is satisfied if and only if $n>2k$ which proves part (ii) of the Lemma.
  \end{proof}
  
  Remark~\ref{Bem 0}~(c) tells us that at least for some special choices of $q$ this result can not be
  improved in an essential way.
    
  \section{Proof of Theorem \ref{Thm subsolution}} \label{sec: Proof}

  \medskip
    
  From now on we assume that the hypotheses of Theorem~\ref{Thm subsolution} are satisfied. In particular we
  will use the hypothesis $(A_1)_\alpha$ where $1\leq \alpha<\frac{n}{n-1}$. Without loss of generality, we
  may consider only the case $N=1$ which simplifies the notation in the following. First we discuss the
  regularity of subsolutions of linear equations. We use the following result which, for Lipschitz
  continuous matrix functions in the case $m=1$, is due to Bao and Zhang. We only present the main
  changes with respect to the proof of Theorem~1.3 in~\cite{ZhBaI_Regularity_of} .  
  
  \begin{prop}[Linear regularity I] \label{Prop Linear regI}
    Let $h\in L^\alpha_{loc}(\Omega)$ satisfy $L^m h=0$ in $\Omega$ in the distributional sense. Then $h\in
    W^{2m,t}_{loc}(\Omega)$ for all $t\in [1,\infty)$. In particular $h\in L^\infty_{loc}(\Omega)$.
  \end{prop}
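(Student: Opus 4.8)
The plan is to follow the scheme of Bao--Zhang's Theorem~1.3, the two ``main changes'' being the replacement of Lipschitz coefficients by the weaker hypothesis $(A1)_\alpha$ and the presence of the power $m$. Write $s:=\alpha/(\alpha-1)\in(n,\infty]$ for the Sobolev exponent in $(A1)_\alpha$, so that $A\in C^{2m-2,\gamma}_{loc}(\Omega)$ with $\gamma:=1-n/s$ by Morrey's embedding. Then $L^m=L\circ\cdots\circ L$ is a uniformly elliptic operator of order $2m$ whose coefficient of $\partial^\beta$ (for $m\le|\beta|\le 2m$) is a polynomial in the entries of $A$ and their derivatives of total order $\le 2m-|\beta|$; all these coefficients lie in $C^{0,\gamma}_{loc}$, with the single exception of the first-order coefficient $\partial_p a_{pq}$ when $m=1$, which lies only in $L^{s}_{loc}$. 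In particular interior $L^{s}$-regularity theory for $L$ is available, and iterating it over the $m$ successive Poisson problems for $L$ shows that the Navier solution of $L^m v=g$ in a ball lies in $W^{2m,s}_{loc}$ whenever $g\in L^{s}_{loc}$.

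\emph{Step 1 ($h\in L^\infty_{loc}$, by duality).} Fix concentric balls $\Omega'\subset\subset B\subset\subset\Omega$, a cut-off $\chi\in C_c^\infty(B)$ with $\chi\equiv1$ near $\overline{\Omega'}$, and set $\delta:=\dist(\supp\nabla\chi,\overline{\Omega'})>0$. Let $G(x,y)$ be the Green function of $L^m$ on $B$ with the Navier conditions $w=Lw=\dots=L^{m-1}w=0$ on $\partial B$; from the Appendix $|G(x,y)|\le C|x-y|^{2m-n}$ (logarithmically modified if $n=2m$), so $\sup_y\int_B|G(x,y)|\,dx<\infty$. For $\psi\in C_c^\infty(\Omega')$ put $\zeta(x):=\int_B G(x,y)\psi(y)\,dy$, the Navier solution of $L^m\zeta=\psi$; then $\|\zeta\|_{L^1(B)}\le C\|\psi\|_{L^1}$, while on the neighbourhood of $\supp\nabla\chi$ where $\psi$ vanishes $\zeta$ is $L^m$-harmonic, so interior estimates give $\|\zeta\|_{C^{2m-1}(\supp\nabla\chi)}\le C\|\zeta\|_{L^1(B)}\le C\|\psi\|_{L^1}$. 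By the preliminary remark $\zeta\in W^{2m,s}_{loc}(B)$, so $\chi\zeta$ is an admissible test function and $\int_B h\,L^m(\chi\zeta)=0$. Expanding $L^m(\chi\zeta)=\chi\,L^m\zeta+[L^m,\chi]\zeta$, where $[L^m,\chi]$ has order $\le 2m-1$ and is supported in $\supp\nabla\chi$, and using $\chi\equiv1$ near $\supp\psi$, one obtains
\[
  0=\int_{\Omega'} h\,\psi+\int_{\supp\nabla\chi} h\,\big([L^m,\chi]\zeta\big).
\]
The coefficients of $[L^m,\chi]$ lie in $L^{s}_{loc}$ (and are bounded when $m\ge 2$; the borderline term being $(\partial_p a_{pq})(\partial_q\chi)\zeta$ for $m=1$), so pairing them against $h\in L^\alpha_{loc}$ and the $C^{2m-1}(\supp\nabla\chi)$-bounded derivatives of $\zeta$ through a single H\"older inequality with exponents $\alpha,s,\infty$ — the borderline combination built into $(A1)_\alpha$ — yields $|\int_{\Omega'}h\,\psi|\le C\|\psi\|_{L^1(\Omega')}$, whence $h\in L^\infty(\Omega')$.

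\emph{Step 2 ($h\in W^{2m,t}_{loc}$, by bootstrap).} Now $h\in L^\infty_{loc}\subset L^2_{loc}$. First one upgrades $h$ to a weak solution: mollifying and invoking a Friedrichs-type commutator estimate for the divergence-form operator $L$ — this is where $\nabla A\in L^{s}_{loc}$ is used — shows that $L^m h=0$ holds in the weak sense, so $h\in W^{m,2}_{loc}(\Omega)$. Peeling the composition $L^m=L\circ L^{m-1}$ and applying the $m=1$ statement repeatedly (whose own proof is a Caccioppoli inequality followed by Calder\'on--Zygmund) then lifts $h$ one factor at a time; the final equation $-a_{pq}\partial_{pq}h=F$ has $F\in L^{s}_{loc}$ (indeed $F\in L^\infty_{loc}$ when $m\ge 2$), to which Calder\'on--Zygmund applies and delivers the asserted $W^{2m,t}_{loc}$-regularity.

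The hard part is Step~2, and inside it the first passage from a bounded distributional solution to a weak solution when $m=1$ with merely H\"older coefficients: non-divergence regularity is then unavailable, and one must genuinely exploit the divergence structure of $L$ together with the borderline integrability $\nabla A\in L^{\alpha/(\alpha-1)}_{loc}$ — exactly the point at which $(A1)_\alpha$, rather than mere Lipschitz continuity, is needed, and the essential ``change'' relative to Bao--Zhang. The second ingredient requiring care is the Green-function input of Step~1 — existence of the Navier Green function of $L^m$ on a ball and the pointwise bounds provided in the Appendix. The remaining pieces — the iterated Calder\'on--Zygmund estimates, the H\"older bookkeeping, and the concluding non-divergence estimate — are routine.
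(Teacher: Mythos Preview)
Your Step~1 is a valid alternative to the paper's argument, but the paper does \emph{not} use the Green function here. Instead it runs an iterative duality: solve $L\phi=w$ in $W^{2,\beta'}\cap W^{1,\beta'}_0(B)$ (Theorem~9.15 in Gilbarg--Trudinger), test the equation for $h$ with $\eta\phi$, and use $\|\phi\|_{W^{1,\alpha'}}\le C\|\phi\|_{W^{2,\beta'}}\le C'\|w\|_{L^{\beta'}}$ to conclude $h\in L^\beta_{loc}$ with $\beta=\frac{n\alpha}{n-\alpha}$; then repeat until $L^t_{loc}$ is reached for every $t<\infty$. Your one-shot approach via the Navier Green function and the commutator localised on $\supp\nabla\chi$ is legitimate (and in fact you can bound $\partial^\alpha\zeta$ on $\supp\nabla\chi$ directly from the Appendix estimates on $\partial_x^\alpha G_m$, without the detour through interior estimates for $L^m$-harmonic functions), but it front-loads the Green-function machinery that the paper only develops later for Proposition~\ref{Prop Linear regII}.

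The genuine gap is in Step~2. You correctly identify the passage ``bounded distributional solution $\Rightarrow$ weak solution'' as the hard part and as the essential departure from Bao--Zhang, but you do not carry it out: you invoke a ``Friedrichs-type commutator estimate'' without saying which one or why it applies to a second-order divergence-form operator with merely $W^{1,s}$ coefficients acting on an $L^\infty$ function. This is precisely the step at which Bao--Zhang's difference-quotient method fails for non-Lipschitz $A$, and naming Friedrichs does not close the hole. The paper sidesteps the issue entirely by \emph{continuing the duality argument}: rather than mollifying $h$, it solves the dual problem $L\phi=w_0+\sum_j\partial_j w_j$ with $\phi\in W^{1,2}_0(B)\cap W^{2,2}(B)$, invokes the estimate $\|\phi\|_{W^{1,t'}(B)}\le C\sum_j\|w_j\|_{L^{t'}(B)}$ of Dolzmann--M\"uller, and tests with $\eta\phi$ as before. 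The dual characterisation of $W^{1,t}$ (Adams--Fournier, Theorem~3.9) then yields $h\in W^{1,t}_{loc}$ for all $t>\frac{n}{n-1}$ directly, after which the bootstrap to $W^{2m,t}_{loc}$ is standard. This is both cleaner than a mollification route and is the actual content of the ``main change'' the paper advertises.
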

  \begin{proof}
    First we consider the case $m=1$. So let $\beta = \frac{n\alpha}{n-\alpha}>\alpha$ and our first aim is to
    show $h\in L^\beta_{loc}(\Omega)$. To this end we proceed as in \cite{ZhBaI_Regularity_of} and take an
    arbitrary function $w\in C^\infty(\Omega)$, let $B\subset\subset\Omega$ be a ball and let $\phi$ be the
    uniquely determined function satisfying
    \begin{equation} \label{Gl dual equation}
      -\diver(A\nabla \phi) = w\quad\text{in }B,\quad \phi\in W^{1,\frac{\beta}{\beta-1}}_0(B)\cap
       W^{2,\frac{\beta}{\beta-1}}(B),
    \end{equation}
    see Theorem~9.15 in \cite{GiTr_elliptic}. By Theorem~9.19 in \cite{GiTr_elliptic} the function $\phi$ is
    twice continuously differentiable with H\"older-continuous second order derivatives and 
    the differential equation is satisfied almost everywhere. Moreover, we have
    $$
      \|\phi\|_{W^{1,\frac{\alpha}{\alpha-1}}(B)}
      \leq C \|\phi\|_{W^{2,\frac{\beta}{\beta-1}}(B)}
      \leq C' \|w\|_{L^{\frac{\beta}{\beta-1}}(B)}
    $$
    for some $C,C'>0$ by Sobolev's embedding theorem and Lemma~9.17 \cite{GiTr_elliptic}. Therefore
    we may test the equation with $\eta\phi$ for some cut-off function $\eta\in C_c^\infty(B)$ and obtain
    \begin{align*}
      0
      &= \int_B h (-\diver(A\nabla (\eta\phi))) \\
      &= \int_B h (-\diver(A\nabla \phi)\eta -2\nabla\phi^T A\nabla \eta -\diver(A\nabla\eta)\phi) \\
      &\geq \int_B hw\eta -
      C\|h\|_{L^\alpha(B)}\big(\|A\|_{L^\infty(B)}\| |\nabla \phi|\|_{L^{\frac{\alpha}{\alpha-1}}(B)}
      +  \| A\|_{W^{1,\frac{\alpha}{\alpha-1}}(B)}
      \|\phi\|_{L^\infty(B)} \big)  \\
      &\geq \int_B hw\eta -
      C'\|h\|_{L^\alpha(B)}\|A\|_{W^{1,\frac{\alpha}{\alpha-1}}(B)}
      \|\phi\|_{W^{1,\frac{\alpha}{\alpha-1}}(B)}  \\
      &\geq \int_B hw\eta -
      C''\|h\|_{L^\alpha(B)} \|A\|_{W^{1,\frac{\alpha}{\alpha-1}}(B)} 
      \|w\|_{L^{\frac{\beta}{\beta-1}}(B)}
    \end{align*}
    by definition of $\beta$ for some $C,C',C''>0$ independent of $w$. As in \cite{ZhBaI_Regularity_of} the
    dual characterization of Lebesgue spaces gives $h\in L^\beta_{loc}(B)$ and iterating this
    procedure as in \cite{ZhBaI_Regularity_of} yields $h\in L^t_{loc}(B)$ for all $t<\infty$.
    
    \smallskip
    
    In order to prove the existence of weak derivatives we can not proceed as in \cite{ZhBaI_Regularity_of}
    since there a difference quotient method is used which relies on the Lipschitz continuity of
    the matrix function.
    Instead we continue with the duality argument. To this end we consider $w_0,w_1,\ldots,w_n\in C^\infty(B)$
    and choose $\phi$ to be the uniquely determined function satisfying
    \begin{equation} \label{Gl dual equation}
      -\diver(A\nabla \phi) = w_0+ \sum_{j=1}^n \partial_j w_j\quad\text{in }B,\quad \phi\in
      W^{1,2}_0(B)\cap W^{2,2}(B).
    \end{equation}
    As above, $\phi$ is twice continuously differentiable with
    H\"older-continuous second order derivatives and the differential equation
    is satisfied almost everywhere. Instead of Lemma~9.17 in \cite{GiTr_elliptic} we use 
    the estimate
    $$
      \|\phi\|_{W^{1,\frac{t}{t-1}}(B)} 
      \leq C \sum_{j=0}^n \|w_j\|_{L^{\frac{t}{t-1}}(B)} 
    $$
    from Lemma~2.2~(i) \cite{DoMu_estimates_for} for $t>\frac{n}{n-1}>\alpha$. Proceeding as
    above we find $C,C',C''>0$ such that
    \begin{align*} 
      0
      &= \int_B h (-\diver(A\nabla (\eta\phi))) \\      
      &\geq \int_B h\big(w_0+\sum_{j=1}^n \partial_j w_j\big)\eta -
      C\|h\|_{L^t(B)} (\| \|A\|_{L^\infty(B)} \||\nabla \phi|\|_{L^{\frac{t}{t-1}}(B)}
      + \| A\|_{W^{1,\frac{\alpha}{\alpha-1}}(B)} \|\phi\|_{\frac{\alpha t}{t-\alpha}}) \\
      &\geq  \int_B h\big(w_0+\sum_{j=1}^n \partial_j w_j\big)\eta -
      C' \|h\|_{L^t(B)} \| A\|_{W^{1,\frac{\alpha}{\alpha-1}}(B)} \|\phi\|_{W^{1,\frac{t}{t-1}}(B)} \\
      &\geq \int_B h\big(w_0+\sum_{j=1}^n \partial_j w_j\big)\eta -
      C''\|h\|_{L^t(B)} \|A\|_{W^{1,\frac{\alpha}{\alpha-1}}(B)} \sum_{j=0}^n
        \|w_j\|_{L^{\frac{t}{t-1}}(B)}.
    \end{align*}
    In the second inequality we used Sobolev's embedding theorem and $\alpha<\frac{n}{n-1}$.
    From this estimate and the dual characterisation of $W^{1,t}(B)$, see Theorem~3.9 in~\cite{AdFo_Sobolev},
    we arrive at $h\in W^{1,t}(B)$ for all $t>\frac{n}{n-1}$. As in~\cite{ZhBaI_Regularity_of} this induces
    $h\in W^{2m,t}(B)$ for all $t<\infty$. 
    
    \smallskip
    
    In the case $m\geq 2$ instead of \eqref{Gl dual equation} (i.e. $L\phi=w$ in $B$) one solves
    $L^m\phi=w$ in $B$ in some subspace of $W^{2m,\beta/(\beta-1)}(B)$ which can be done by induction over
    $m$ using the same theorems as above. Similar estimates then allow to conclude as in the case $m=1$.
  \end{proof}
  
   With Proposition~\ref{Prop Linear regI} at hand, we may now discuss the regularity properties of
   subsolutions of linear problems. One main feature of our result is that integrability assumptions
   on the negative parts of subsolutions can be used to deduce slightly better regularity properties.
          
  \begin{prop}[Linear regularity II]  \label{Prop Linear regII}
    Let $u\in L^\alpha_{loc}(\Omega)$ satisfy $L^m u\leq g$ in $\Omega$ in the distributional sense
    where $g\in L^r_{loc}(\Omega),g\geq 0$. Then the following implications hold true:
    \begin{itemize}
      \item[(i)] If $r=1$ then $u^+ \in L^s_{loc}(\Omega)$ for all $s\in [1,\frac{n}{n-2m})$,
      \item[(ii)] If $r\in [1,\frac{n}{2m})$ then $u^+ \in L^{nr/(n-2mr)}_{loc}(\Omega)$,
      \item[(iii)] If $r>\frac{n}{2m}$ then $u^+ \in L^{\infty}_{loc}(\Omega)$.
    \end{itemize}
    In each of these cases one has $u\in W^{2m-1,t}_{loc}(\Omega)$ for all $t\in
    [1,\frac{n}{n-1})$. Moreover, assuming $u^-\in L^{\tilde q}_{loc}(\Omega)$ and $r\geq \frac{2mn\tilde
    q}{2mn\tilde q+n-\tilde q(n-2m)}$ the following implications hold true: 
    \begin{itemize}
      \item[(iv)] If $\tilde q>\frac{n}{n-2m},n>2m$ then $u\in W^{2m-1,t}_{loc}(\Omega)$ for all 
      $t\in [1,\frac{2m\tilde q}{1+(2m-1)\tilde q})$,
      \item[(v)] If $\tilde q=\infty,n=2m$ then $u\in W^{2m-1,2m/(2m-1)}_{loc}(\Omega)$. 
    \end{itemize}  
  \end{prop}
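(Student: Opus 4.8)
The plan is to reduce the statement to the mapping properties of Riesz potentials plus Lemma \ref{Lem 1}, via a Green's function representation of the subsolution. Fix a ball $B'\subset\subset\Omega$. The distribution $\mu:=g-L^mu$ is nonnegative, hence a Radon measure, finite on $B'$. Let $G$ be the Green's function of $L$ on $B'$ and $G^{(m)}$ its $m$-fold iterate (the Green's function of $L^m$ with Navier boundary conditions); its construction and the pointwise bounds $c|x-y|^{2m-n}\le G^{(m)}(x,y)\le C|x-y|^{2m-n}$ for $n>2m$ (logarithmic for $n=2m$) together with the derivative bounds $|\nabla_x^jG^{(m)}(x,y)|\le C|x-y|^{2m-n-j}$ are the content of the Appendix. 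These give, on $B'$, the decomposition
\[
 u=h+w_g-w_\mu,\qquad w_g(x):=\int_{B'}G^{(m)}(x,y)g(y)\,dy,\quad w_\mu(x):=\int_{B'}G^{(m)}(x,y)\,d\mu(y),
\]
where $L^mh=0$ in $B'$ and $w_g,w_\mu\ge0$. Checking $h=u-w_g+w_\mu\in L^\alpha_{loc}(B')$, Proposition \ref{Prop Linear regI} yields $h\in W^{2m,t}_{loc}(B')$ for all $t<\infty$, in particular $h\in L^\infty_{loc}$.

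I would first treat $u^+$. Since $w_\mu\ge0$, one has $u^+\le(h+w_g)^+\le|h|+w_g$, so it suffices to bound the potential $w_g$, which by the upper Green bound is controlled by the Riesz potential $x\mapsto\int_{B'}|x-y|^{2m-n}|g(y)|\,dy$ of order $2m$. Its standard mapping properties give: $w_g\in L^s_{loc}$ for all $s<\tfrac n{n-2m}$ when $r=1$ (weak-type endpoint), $w_g\in L^{nr/(n-2mr)}_{loc}$ when $1<r<\tfrac n{2m}$, and $w_g\in L^\infty_{loc}$ when $r>\tfrac n{2m}$; combined with $h\in L^\infty_{loc}$ this is exactly (i)--(iii). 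For the Sobolev claim valid in all three cases I differentiate the representation: $\nabla^{2m-1}h\in L^t_{loc}$ for all $t$; $|\nabla^{2m-1}w_g(x)|\le C\int_{B'}|x-y|^{1-n}|g(y)|\,dy$ lies in $L^{nr/(n-r)}_{loc}\subset L^t_{loc}$ for $t<\tfrac n{n-1}$ since $r\ge1$; and $|\nabla^{2m-1}w_\mu(x)|\le C\int_{B'}|x-y|^{1-n}\,d\mu(y)=C\phi_{1-n}(x)$, which lies in $L^t_{loc}$ for all $t<\tfrac n{n-1}$ by a Jensen--Fubini estimate using $\mu(B')<\infty$ (this is the computation behind the proof of Lemma \ref{Lem 1}, with $\gamma=0$, $q=\infty$, $k=n-1$). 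Hence $u\in W^{2m-1,t}_{loc}$ for $t<\tfrac n{n-1}$.

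For the refined parts I would exploit $w_\mu=h+w_g-u\le|h|+w_g+u^-$, so that $u^-\in L^{\tilde q}_{loc}$ together with the integrability of $w_g$ pushes $w_\mu$ into a good Lebesgue space — the lower bound on $r$ being precisely the threshold under which $w_g$ and its $(2m-1)$-st derivatives are integrable enough not to degrade the outcome. In case (iv), $n>2m$, the lower Green bound gives $\phi_{2m-n}\le c^{-1}w_\mu\in L^{\tilde q}_{loc}$; since $0>2m-n>(2m-1)-n$ and $\tilde q>\tfrac n{n-2m}>1$, Lemma \ref{Lem 1}(i) with $\gamma=2m-n$, $k=2m-1$, $q=\tilde q$ gives $\phi_{1-n}=\phi_{\gamma-k}\in L^p_{loc}$ for all $p<\tfrac{(n+\gamma)\tilde q}{n+\gamma+(\tilde q-1)k}=\tfrac{2m\tilde q}{1+(2m-1)\tilde q}$; with $|\nabla^{2m-1}w_\mu|\le C\phi_{1-n}$ and the matching bound on $\nabla^{2m-1}w_g$, this gives $u\in W^{2m-1,t}_{loc}$ for $t<\tfrac{2m\tilde q}{1+(2m-1)\tilde q}$. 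In case (v), $n=2m$, the logarithmic lower Green bound gives $\psi\le c^{-1}w_\mu\in L^\infty_{loc}$ (here $r>1=\tfrac n{2m}$ makes $w_g$ bounded), and Lemma \ref{Lem 1}(ii) — or, since $k=n-1$ lands exactly on the borderline $n=2k$, a Riesz-composition variant of its proof exploiting $\psi\in L^\infty$ — gives $\phi_{1-n}\in L^{n/(n-1)}_{loc}=L^{2m/(2m-1)}_{loc}$, hence $u\in W^{2m-1,2m/(2m-1)}_{loc}$.

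The two places I expect to be delicate are these. First, justifying the representation $u=h+w_g-w_\mu$ for a merely distributional subsolution in $L^\alpha_{loc}$ under the weak hypothesis $(A1)_\alpha$: this rests entirely on the Appendix's construction of $G$, $G^{(m)}$ and on their pointwise and derivative estimates, and on verifying $h\in L^\alpha_{loc}(B')$ so that Proposition \ref{Prop Linear regI} is applicable. Second, the exponent bookkeeping in (iv) and (v): one must confirm that the stated bound on $r$ makes the Riesz-potential estimate for $\nabla^{2m-1}w_g$ and the $L^{\tilde q}$-bound for $w_\mu$ exactly compatible with the gain produced by Lemma \ref{Lem 1}, and, when $n=2m$, handle the endpoint of Lemma \ref{Lem 1}(ii) by the Riesz-composition argument just mentioned.
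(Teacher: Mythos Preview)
Your proposal is correct and follows essentially the same approach as the paper: the Green's function representation $u = h + w_g - w_\mu$, Riesz potential / Hardy--Littlewood--Sobolev estimates for $w_g$, Proposition~\ref{Prop Linear regI} for $h$, and Lemma~\ref{Lem 1} applied via the lower Green bound on $G^{(m)}$ for the refined parts (iv)--(v). The only technical point worth noting is that the paper works on nested balls $B\subset\subset B'$ (Green's function defined on $B'$, estimates taken on $B$) so as to secure the uniform pointwise Green bounds from the Appendix and the finiteness of $\mu(B)$ via local finiteness of the Radon measure on the larger ball; your flagged caution about the endpoint of Lemma~\ref{Lem 1}(ii) when $n=2m$ is apt, since the paper invokes it there without further comment.
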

  \begin{proof}
    It suffices to verify the above-mentioned regularity properties in an arbitrary compactly contained ball
    $B\subset\subset\Omega$. For any given such ball let $G_1$ be the Green's function of the operator $L$ on
    a slightly larger ball $B'$ associated to homogeneous Dirichlet boundary conditions on $\partial B'$.
    Under our regularity assumption $(A1)_\alpha$ the existence of $G_1$ is guaranteed by Theorem~1.1 in
    \cite{GrWi_the_green}. We define the function $G_m:B'\times B'\to [0,\infty]$ inductively via 
    \begin{equation} \label{Gl Def Gk}
      G_k(x,y)= \int_{B'} G(x,z)G_{k-1}(z,y)\,dy \qquad (k=2,\ldots,m).
    \end{equation}
    Then one has $L^m G_m(\cdot,y) = \delta(\cdot-y)$ in $B$ in the distributional sense as well as
    \begin{align} \label{Gl properties Gm}
      \begin{aligned}
      |\partial^\alpha_x G_m(x,y)| 
      &\leq C |x-y|^{2m-n-|\alpha|} \qquad\text{for } |\alpha|=1,\ldots,2m-1
      &&(x,y\in B), \\
      G_m(x,y)
      &\leq \begin{cases}
        C|x-y|^{2m-n} &, \text{ if }n>2m, \\
        C\log(1/|x-y|)+C' &, \text{ if }n=2m
      \end{cases} 
	  &&(x,y\in B), \\
	  G_m(x,y)
	  &\geq 
	  \begin{cases}
        c|x-y|^{2m-n} &, \text{ if }n>2m \\
        c\log(1/|x-y|)-c' &, \text{ if }n=2m
      \end{cases}       
	  &&(x,y\in B).
	  \end{aligned}
    \end{align}
    Here, $c,c',C,C'$ are positive numbers independent of $x,y\in B$, $\alpha\in\N_0^n$ is a multiindex and
    $\delta$ is the Dirac measure on $\R^n$ centered at $0$. In the Appendix we provide the references for
    these estimates.
     
    \smallskip
    
    \noindent
    {\it Step 1: A representation formula.}\; The function $v_B:= -u|_B+\int_B G_m(\cdot,y)g(y)\,dy$ satisfies
    ${L^mv\geq 0}$ in~$B$ in the distributional sense. Using Theorem~2.17 in~\cite{Mitrea_distributions} 
    we obtain $L^mv_B = \mu_B$ in~$B$ where $\mu_B$ is a Radon measure. Defining $h_B:= -v_B+\int_B
    G_m(\cdot,y)\,d\mu_B(y)$ we arrive at
    \begin{align} \label{Gl polyharmonic Darstellung}
      \begin{aligned}
      u|_B
      &= - \underbrace{\int_B G_m(\cdot,y)\,d\mu_B(y)}_{=:u_1} 
        + \underbrace{\int_B G_m(\cdot,y)g(y)\,dy}_{=:u_2} 
        + h_B\quad
      \text{where }L^mh_B=0, h_B\in L^1_{loc}(B).
      \end{aligned}
    \end{align}
    Moreover, $\mu_B = (-L^mu+g)|_B = ((-L^mu+g)|_{B'})|_B=\mu_{B'} \righthalfcup B$ implies
    $\mu_B(B)<\infty$ since $\mu_B'$ is a Radon measure and thus locally finite.
    This property will be used when we apply Lemma~\ref{Lem 1} in Step~3.
    
    \smallskip
    
    \noindent
    {\it Step 2: Proof of (i),(ii),(iii) -- Integrability.}\; By  the Hardy-Littlewood-Sobolev Theorem (see
    \cite{LiLo_Analysis}, Theorem~4.3) $u_2$ has the integrability properties
    which we claimed to hold for $u$ in (i),(ii),(iii), respectively. By Proposition~\ref{Prop
    Linear regI} we moreover have $h_B\in L^\infty_{loc}(B)$. Hence, the inequality $u|_B^+\leq u_2+|h_B|$
    implies that $u^+$ lies in the same Lebesgue spaces as $u_2$ which is what we wanted to show.
    
    \smallskip
    
    \noindent
    {\it Step 3: Proof of (iv),(v) -- Regularity.}\;  From \eqref{Gl properties Gm} we get $u_1,u_2\in
    W^{2m-1,t}(B)$ for all $t\in [1,\frac{n}{n-1})$. Hence, Proposition~\ref{Prop Linear regI} tells us that $u=-u_1+u_2+h_B$ lies
    in the same spaces. Now let us additionally assume $u^-\in L^{\tilde q}(B),g\in L^r_{loc}(B)$ for
    $\tilde q,r$ as in the statement of the theorem. The assumption on $r$, the upper bounds for the
    derivatives of $G_m$ from \eqref{Gl properties Gm} and the Hardy-Littlewood-Sobolev Theorem imply
    \begin{equation} \label{Gl regI}
      u_2 \in W^{2m-1,\frac{nr}{n-r}}_{loc}(B) 
      \subset W^{2m-1,\frac{2m\tilde q}{1+(2m-1)\tilde q}}_{loc}(B). 
	\end{equation}
    Furthermore, the assumption $u^-\in L^{\tilde q}_{loc}(\Omega)$ implies $u_1+|h_B|\in L^{\tilde q}(B)$ via 
    \eqref{Gl polyharmonic Darstellung} and thus ${u_1\in L^{\tilde q}_{loc}(B)}$ by 
    Proposition~\ref{Prop Linear regI}. In the case $n>2m$ this implies
    $\phi_{2m-n}\in L^{\tilde q}_{loc}(B)$ where $\phi_{2m-n}$ was defined in \eqref{Gl Def phi_psi}. Indeed,
    using the lower bound for $G_m$ from \eqref{Gl properties Gm} we get 
    $$
         u_1(x) 
         = \int_B G_m(x,y)\, d\mu_B(y) 
         \geq c \int_B |x-y|^{2m-n}\,d\mu_B(y)
         = c\, \phi_{2m-n}(x).
    $$ 
    From Lemma \ref{Lem 1} (i) we obtain $\phi_{1-n}\in L^p(B)$ for $p\in [1,\frac{2m\tilde
    q}{1+(2m-1)\tilde q})$ so that the upper bound for the derivatives of $G_m$ from \eqref{Gl properties Gm}
    imply 
    \begin{equation} \label{Gl regII}
      u_1\in W^{2m-1,p}_{loc}(B) \qquad\text{if } p\in [1,\frac{2m\tilde q}{1+(2m-1)\tilde q}),\; n>2m.
    \end{equation} 
    Similarly, in the case $n=2m$ the assumption $u^-\in L^\infty_{loc}(\Omega)$ implies $u_1,\psi\in
    L^\infty_{loc}(B)$ so that Lemma~\ref{Lem 1}~(ii) yields $\phi_{1-n}=\phi_{1-2m}\in
    L^{2m/(2m-1)}_{loc}(B)$ and thus
    \begin{equation} \label{Gl regIII}
       u_1\in W^{2m-1,\frac{2m}{2m-1}}_{loc}(B) \qquad\text{if }\tilde q=\infty,\; n=2m.
    \end{equation} 
    From \eqref{Gl regI},\eqref{Gl regII},\eqref{Gl regIII} and Proposition~\ref{Prop Linear regI} we obtain 
    the assertion of the theorem.
  \end{proof}

  \begin{bem} \label{Bem 1}~
    \begin{itemize}
      \item[(a)] Every (nonnegative) Radon measure $\mu$ on $B$ defines a distributional
      subsolution via the formula \eqref{Gl polyharmonic Darstellung}.
      \item[(b)] In the more special case $L^m u = g$ we have $d\mu_B(y)=g^-(y)\,dy$ which improves
      the regularity properties of $u_1$ according to the integrability properties of $g^-$. This is
      responsible for the fact that solutions of $L^m u=g$ with $g\in L^r_{loc}(\Omega),r\geq 1$ are
      more regular than subsolutions. For elliptic equations with measure-valued right hand sides (such as
      subsolutions) this is in general not true.
    \end{itemize}
  \end{bem}

  \medskip
  
  \noindent
  {\it Proof of Theorem \ref{Thm subsolution}} 
  
  \medskip
  
  \noindent
  Applying Proposition \ref{Prop Linear regII} to $g:=C(1+{u^+}^p)$ we find that it is sufficient to
  prove ${u^+\in L^\infty_{loc}(\Omega)}$. In case $1\leq p<\frac{n}{n-2m}$ the proposition yields $u^+\in
  L^{q_0}_{loc}(\Omega)$ and thus $g\in L^{q_0/p}_{loc}(\Omega)$ for some $q_0\in (p,\frac{n}{n-2m})$. Using
  again Proposition~\ref{Prop Linear regII} one inductively proves $u^+\in L^{q_k}_{loc}(\Omega)$  where
  \begin{equation*}
    q_{k+1} :=
      \begin{cases}
  	  \frac{nq_k}{np-2mq_k} &, \text{ if } 1<\frac{q_k}{p}<\frac{n}{2m}, \\
	  2q_k &, \text{ if } \frac{q_k}{p} = \frac{n}{2m},  \\
	  \infty &, \text{ if } \frac{q_k}{p} >\frac{n}{2m}.
      \end{cases} 
  \end{equation*}
  The sequence $(q_k)$ increases due to $q_0>\frac{n(p-1)}{2m}$ (because of $q_0>p$) and reaches $+\infty$
  after finitely many steps so that the assertion is proved in the case $1\leq p<\frac{n}{n-2m}$.
  In the case $n>2m,p\geq \frac{n}{n-2m}$ the assumption $u^+\in L^q_{loc}(\Omega),q>\frac{n(p-1)}{2m}$ leads
  to the choice $q_0:= q$ and the same iteration as above gives the result. \qed
  
  \section*{Appendix -- On Green's function} \label{sec: Greens function}
  
    Let us briefly recall why the estimates from \eqref{Gl properties Gm} hold if the assumption
    $(A_1)_\alpha$ is satisfied. First we note that due to this
    assumption the $(2m-2)-$th order derivatives of the matrix functions $A_1,\ldots,A_N$ from the theorem are
    H\"older-continuous by Sobolev's embedding theorem so that, generally speaking, $L^p$-estimates and
    Schauder estimates for elliptic problems in divergence-form are applicable. In the following let $A$ be
    one of these matrices and let $G_m$ be the function defined in \eqref{Gl Def Gk}.
    
    \smallskip
    
    We first analyze the properties of $G_1$ and we start with the exceptional case $n=2$.
    The logarithmic bounds for $G_1$ are proved in section~6 in \cite{DoMu_estimates_for}. Notice that for the
    lower bounds one uses that $B$ is compactly contained in $B'$. For the upper bounds of
    the derivatives let us recall from \cite{DuzMin_Gradientestimates} and estimate~(2) in~\cite{DoMu_estimates_for} that the
    following interior estimates hold for any weak solution $w$ of $\diver(A\nabla w)=0$ in $B'$:
    \begin{equation} \label{Gl int_est n=2}
      \|\nabla^k w\|_{L^\infty(B_d)}\leq Cd^{-k} \|\nabla w\|_{L^{2,\infty}(B_{2d})} \qquad (1\leq k\leq
      2m-1,n=2)
    \end{equation} 
    Here, $B_{2d}$ is a ball of radius $2d$ contained in $B'$ and $L^{2,\infty}(B_{2d})$ denotes the
    Lorentz space (or weak-$L^2$ space). We refer the interested reader to the paper of Dolzmann and M\"uller
    \cite{DoMu_estimates_for} where these spaces are used in the context of Green's functions.
    From estimate~(11) in \cite{DoMu_estimates_for} we get $\|\nabla G_1(\cdot,y)\|_{L^{2,\infty}(B')}\leq C$
    so that the interior estimates \eqref{Gl int_est n=2} applied to $G_1(\cdot,y)$ on the ball $B_{2d}(x)$
    with $d=\frac{1}{4}\min\{|x-y|,\dist(B,\partial B')\}$ yield
    \begin{align*}
      |\partial^\alpha_x G_1(x,y)| 
      \leq C |x-y|^{2-n-|\alpha|} \qquad (x,y\in B,\;1\leq |\alpha|\leq 2m-1,\;n=2), 
    \end{align*}
    see also \cite{DoMu_estimates_for}, Lemma~3 where the same reasoning was used.
    This proves the estimates for $G_1$ from \eqref{Gl properties Gm} for $n=2$.
    
    \smallskip
    
    In the case $n\geq 3$ the bounds for $G_1(x,y)$ follow from \cite{GrWi_the_green}, Theorem~1.1. The
    bounds for the derivatives of $G_1(\cdot,y)$ follow from interior estimates in the same manner as above.
    Instead of \eqref{Gl int_est n=2}, however, one uses 
    $$
      \|\nabla^k w\|_{L^\infty(B_d)}\leq Cd^{-k}\|w\|_{L^\infty(B_{2d})} 
      \qquad (k\leq 2m-1,n\geq 3)
    $$ 
    This Schauder type estimate follows inductively from \cite{GiTr_elliptic}, Corollary~6.3, see also
    Lemma~3.1 in \cite{GrWi_the_green} for the special case $k=1$. Hence we obtain
    \begin{align*}
      |\partial^\alpha_x G_1(x,y)| 
      \leq C |x-y|^{2-n-|\alpha|} \qquad
      (x,y\in B,\;|\alpha|\leq 2m-1,\;n\geq 3) 
    \end{align*}
	so that the estimates for $G_1$ in the case $n\geq 3$ are proved, too.
	
	\smallskip
	
	Finally, using the estimates for $G_1$ one can inductively derive the corresponding estimates for $G_m$
	via the formula \eqref{Gl Def Gk}.

  \section*{Acknowledgements} 
    The work on this project was supported by the Deutsche Forschungsgemeinschaft 
    (DFG, German Research Foundation) - grant number MA 6290/2-1.

\medskip
  
\bibliographystyle{plain}
\bibliography{doc}

\end{document}